\newtheorem{theorem}{Theorem}
\newtheorem{definition}[theorem]{Definition}
\newtheorem{example}[theorem]{Example}
\newtheorem{problem}[theorem]{Problem}
\newtheorem{remark}[theorem]{Remark}
\title{\LARGE \bf Variable Order Fractional Variational Calculus\\
for Double Integrals$^*$}
\author{Tatiana Odzijewicz$^{1}$, Agnieszka B. Malinowska$^{2}$ and Delfim F. M. Torres$^{1}$%
\thanks{*Part of the first author's Ph.D., which is carried out at the University of Aveiro
under the Doctoral Programme \emph{Mathematics and Applications} of Universities of Aveiro and Minho.}%
\thanks{$^{1}$T. Odzijewicz and D. F. M. Torres are with the Center for Research
and Development in Mathematics and Applications, Department of Mathematics, University of Aveiro,
3810-193 Aveiro, Portugal {\tt\small tatianao@ua.pt, delfim@ua.pt}}%
\thanks{$^{2}$A. B. Malinowska is with the Faculty of Computer Science,
Bia{\l}ystok University of Technology, 15-351 Bia\l ystok, Poland
{\tt\small a.malinowska@pb.edu.pl}}}
\begin{document}

\maketitle


\begin{abstract}
We introduce three types of partial fractional operators of variable order.
An integration by parts formula for partial fractional integrals of variable order
and an extension of Green's theorem are proved. These results allow us to obtain
a fractional Euler--Lagrange necessary optimality condition for variable
order two-dimensional fractional variational problems.
\end{abstract}


\begin{keywords}
Variable order fractional calculus, fractional calculus of variations,
Green's theorem, optimality conditions.
\end{keywords}


\section{INTRODUCTION}

Fractional variational calculus is a mathematical discipline that consists
in extremizing (minimizing or maximizing) functionals whose Lagrangians
contain fractional integrals and derivatives. For the first link between
calculus of variations and fractional calculus we should look back
to the XIXth century. In 1823, Niels Heinrik Abel considered the problem
(Abel's mechanical problem) of finding a curve, lying in a vertical plane,
for which the time taken by a material point sliding without friction
from the highest point to the lowest one, is destined function of height \cite{Abel}.
Abel's mechanical problem is a generalization of the tautochrone problem,
which is part of the calculus of variations (and optimal control).
Despite of this early example, fractional variational calculus became
a research field only in the XXth century.
The subject was initiated in 1996-1997 by Riewe, who derived Euler--Lagrange fractional
differential equations and showed how non-conservative systems
in mechanics can be described using fractional derivatives
\cite{CD:Riewe:1996,CD:Riewe:1997}. Nowadays, the fractional calculus
of variations and fractional optimal control are strongly developed (see, \textrm{e.g.},
\cite{MR2762638,MyID:209,Shakoor:01,Cresson,gastao1,MyID:089,MR2905862,book:Klimek,MyID:181,MyID:207,FVC_Gen,MyID:227}).
For the state of the art, we refer the reader to the recent book \cite{MyID:208}.

In 1993, Samko and Ross investigated integrals and derivatives not of a constant
but of variable order \cite{Samko:Ross1,Samko,Samko:Ross2}. Afterwards, several pure mathematical
and applicational papers contributed to the theory of variable order fractional calculus
(see, \textrm{e.g.}, \cite{Atanackovic1,Coimbra,Diaz:Coimbra,Lorentzo,Ramirez:Coimbra1,Ramirez:Coimbra2}).
Here, our primary goal is to study problems of the calculus of variations
with functionals given by two-dimensional definite integrals
involving partial derivatives of variable fractional order.
It should be mentioned that most results in fractional variational calculus
are for single time, and that the literature regarding the multidimensional
case is scarce: in \cite{MyID:182} a fractional theory of the calculus of variations for multiple integrals
is developed for Riemann--Liouville fractional derivatives
and integrals in the sense of Jumarie; in \cite{Cresson2}
a Lagrangian structure for the Stokes equation, the fractional wave equation, the diffusion
or fractional diffusion equations, are obtained using a fractional embedding theory;
and in \cite{tatiana} fractional isoperimetric problems of calculus of
variations with double integrals are considered. Here we develop
a more general fractional theory of the calculus of variations for multiple integrals,
where the fractional order is not a constant but a function.

The article is organized as follows. In Section~\ref{sec:prelim} we give the definitions and basic properties
of both ordinary and partial integrals and derivatives of variable fractional order. An extension of Green's theorem,
to the variable fractional order, is then obtained in Section~\ref{sec:MR}. Section~\ref{sec:VOFVC} gives the proof
of a necessary optimality condition for the two-dimensional fundamental problem of the calculus of variations.
We finish with Section~\ref{sec:conc} of conclusions.


\section{VARIABLE ORDER FRACTIONAL OPERATORS}
\label{sec:prelim}

In this section we introduce the notions of ordinary
and partial fractional operators of variable order.
Along the text $L_1$ denotes the class of Lebesgue
integrable functions, $AC$ the class of absolutely
continuous functions, and by $\partial_i F$ we understand
the partial derivative of a certain function $F$ with respect to
its $i$th argument.

\begin{definition}
\label{def:VORLI}
Let $0<\alpha(t,\tau)<1$ for all $t, \tau \in [a,b]$, $f\in L_1 [a,b]$,
and $\Gamma$ be the Gamma function, \textrm{i.e.},
$$
\Gamma(z) = \displaystyle \int_0^\infty \mathrm{e}^{-t} t^{z-1} dt.
$$
Then,
\begin{displaymath}
{_{a}}\textsl{I}^{\alpha(\cdot,\cdot)}_{t}f(t)
= \int\limits_a^t\frac{1}{\Gamma(\alpha(t,\tau))}
(t-\tau)^{\alpha(t,\tau)-1}f(\tau)d\tau \quad (t>a)
\end{displaymath}
is called the left Riemann--Liouville integral
of variable fractional order $\alpha(\cdot,\cdot)$, while
\begin{displaymath}
{_{t}}\textsl{I}^{\alpha(\cdot,\cdot)}_{b}f(t)
=\int\limits_t^b \frac{1}{\Gamma(\alpha(\tau,t))}
(\tau-t)^{\alpha(\tau,t)-1}f(\tau)d\tau \quad (t<b)
\end{displaymath}
denotes the right Riemann--Liouville integral
of variable fractional order $\alpha(\cdot,\cdot)$.
\end{definition}

\begin{example}[\cite{Samko:Ross2}]
Let $\alpha(t,\tau) = \alpha(t)$ be a function
depending only on variable $t$,
$\frac{1}{n}<\alpha(t)<1$ for all $t \in [a,b]$
and a certain $n\in\mathbb{N}$ greater or equal than two,
and $\gamma>-1$. Then,
\begin{equation*}
{_{a}}\textsl{I}^{\alpha(\cdot)}_{t} (t-a)^{\gamma}
=\frac{\Gamma(\gamma+1)(t-a)^{\gamma+\alpha(t)}}{\Gamma(\gamma+\alpha(t)+1)}.
\end{equation*}
\end{example}

\begin{definition}
\label{def:VORLD}
Let $0<\alpha(t,\tau)<1$ for all $t, \tau \in [a,b]$.
If ${_{a}}\textsl{I}^{1-\alpha(\cdot,\cdot)}_{t}f \in AC[a,b]$,
then the left Riemann--Liouville derivative of variable fractional
order $\alpha(\cdot,\cdot)$ is defined by
\begin{multline*}
{_{a}}\textsl{D}^{\alpha(\cdot,\cdot)}_{t} f(t)
= \frac{d}{dt} {_{a}}\textsl{I}^{1-\alpha(\cdot,\cdot)}_{t} f(t)\\
=\frac{d}{dt}\int\limits_a^t
\frac{1}{\Gamma(1-\alpha(t,\tau))}(t-\tau)^{-\alpha(t,\tau)} f(\tau)d\tau \quad (t>a),
\end{multline*}
while the right Riemann--Liouville derivative of variable fractional order
$\alpha(\cdot,\cdot)$ is defined for functions $f$ such that
${_{t}}\textsl{I}^{1-\alpha(\cdot,\cdot)}_{b}f\in AC[a,b]$ by
\begin{multline*}
{_{t}}\textsl{D}^{\alpha(\cdot,\cdot)}_{b}f(t)
= -\frac{d}{dt} {_{t}}\textsl{I}^{1-\alpha(\cdot,\cdot)}_{b}f(t)\\
=\frac{d}{dt}\int\limits_t^b
\frac{-1}{\Gamma(1-\alpha(\tau,t))}(\tau-t)^{-\alpha(\tau,t)}f(\tau)d\tau \quad (t<b).
\end{multline*}
\end{definition}

\begin{definition}
\label{definition:Caputo}
Let $0<\alpha(t,\tau)<1$ for all $t, \tau \in [a,b]$.
If $f\in AC[a,b]$, then the left Caputo derivative
of variable fractional order $\alpha(\cdot,\cdot)$ is defined by
\begin{multline*}
{^{C}_{a}}\textsl{D}^{\alpha(\cdot,\cdot)}_{t}f(t)
={_{a}}\textsl{I}^{1-\alpha(\cdot,\cdot)}_{t}\frac{d}{dt}  f(t)\\
=\int\limits_a^t
\frac{1}{\Gamma(1-\alpha(t,\tau))}(t-\tau)^{-\alpha(t,\tau)}\frac{d}{d\tau}f(\tau)d\tau \quad (t>a),
\end{multline*}
while the right Caputo derivative of variable fractional order $\alpha(\cdot,\cdot)$ is given by
\begin{multline*}
{^{C}_{t}}\textsl{D}^{\alpha(\cdot,\cdot)}_{b}f(t)
=-{_{a}}\textsl{I}^{1-\alpha(\cdot,\cdot)}_{t}\frac{d}{dt}  f(t)\\
=\int\limits_t^b\frac{-1}{\Gamma(1-\alpha(\tau,t))}
(\tau-t)^{-\alpha(\tau,t)}\frac{d}{d\tau}f(\tau)d\tau \quad (t<b).
\end{multline*}
\end{definition}

Let $\Delta_n=[a_1,b_1]\times\dots\times [a_n,b_n]$, $n\in\mathbb{N}$, be a subset
of $\mathbb{R}^n$, $\textbf{t}=(t_1,\dots, t_n) \in \Delta_n$,
and $\alpha_i(\cdot,\cdot):[a_i,b_i]\times [a_i,b_i]\rightarrow \mathbb{R}$ be
such that $0<\alpha_i(t_i,\tau)<1$ for all $t_i, \tau \in [a_i,b_i]$, $i=1,\dots,n$.
Partial integrals and derivatives of variable fractional order are a natural
generalization of the corresponding one-dimensional variable order fractional
integrals and derivatives.

\begin{definition}
\label{def:VOPI}
Let function $f=f(t_1,\dots,t_n)$ be continuous on the set $\Delta_n$.
The left Riemann--Liouville partial integral of variable fractional order
$\alpha_i(\cdot,\cdot)$, with respect to the $i$th variable $t_i$, is given by
\begin{multline*}
{_{a_i}}\textsl{I}^{\alpha_i(\cdot,\cdot)}_{t_i}f(\textbf{t})
= \int\limits_{a_i}^{t_i}\frac{1}{\Gamma(\alpha_i(t_i,\tau))}
(t_i-\tau)^{\alpha_i(t_i,\tau)-1}\\
\times f(t_1,\dots,t_{i-1},\tau,t_{i+1},\dots,t_n)d\tau \quad (t_i>a_i),
\end{multline*}
while
\begin{multline*}
{_{t_i}}\textsl{I}^{\alpha_i(\cdot,\cdot)}_{b_i}f(\textbf{t})
=\int\limits_{t_i}^{b_i} \frac{1}{\Gamma(\alpha_i(\tau,t_i))}
(\tau-t_i)^{\alpha_i(\tau,t_i)-1}\\
\times f(t_1,\dots,t_{i-1},\tau,t_{i+1},\dots,t_n)d\tau \quad (t_i<b_i)
\end{multline*}
denotes the right Riemann--Liouville partial integral of variable
fractional order $\alpha_i(\cdot,\cdot)$ with respect to variable $t_i$.
\end{definition}

\begin{definition}
\label{def:VOPRL}
If ${_{a_i}}\textsl{I}^{1-\alpha_i(\cdot,\cdot)}_{t_i}f \in C^1(\Delta_n)$,
then the left Riemann--Liouville partial derivative of variable fractional
order $\alpha_i(\cdot,\cdot)$, with respect to the $i$th variable $t_i$, is given by
\begin{multline*}
{_{a_i}}\textsl{D}^{\alpha_i(\cdot,\cdot)}_{t_i} f(\textbf{t})
= \frac{\partial}{\partial t_i} {_{a_i}}\textsl{I}^{1-\alpha_i(\cdot,\cdot)}_{t_i} f(\textbf{t})\\
=\frac{\partial}{\partial t_i}\int\limits_{a_i}^{t_i}
\frac{1}{\Gamma(1-\alpha_i(t_i,\tau))}(t_i-\tau)^{-\alpha_i(t_i,\tau)}\\
\times f(t_1,\dots,t_{i-1},\tau,t_{i+1},\dots,t_n) d\tau \quad (t_i>a_i),
\end{multline*}
while the right Riemann--Liouville partial derivative of variable fractional order
$\alpha_i(\cdot,\cdot)$, with respect to the $i$th variable $t_i$,
is defined for functions $f$ such that
${_{t_i}}\textsl{I}^{1-\alpha_i(\cdot,\cdot)}_{b_i}f \in C^1(\Delta_n)$ by
\begin{multline*}
{_{t_i}}\textsl{D}^{\alpha_i(\cdot,\cdot)}_{b_i}f(\textbf{t})
= -\frac{\partial}{\partial t_i} {_{t_i}}\textsl{I}^{1-\alpha_i(\cdot,\cdot)}_{b_i}f(\textbf{t})\\
=\frac{\partial}{\partial t_i}\int\limits_{t_i}^{b_i}
\frac{-1}{\Gamma(1-\alpha_i(\tau,t_i))}(\tau-t_i)^{-\alpha_i(\tau,t_i)}\\
\times f(t_1,\dots,t_{i-1},\tau,t_{i+1},\dots,t_n)d\tau \quad (t_i<b_i).
\end{multline*}
\end{definition}

\begin{definition}
\label{def:VOPC}
Let $f\in C^1(\Delta_n)$. The left Caputo partial derivative
of variable fractional order $\alpha_i(\cdot,\cdot)$,
with respect to the $i$th variable $t_i$, is defined by
\begin{equation*}
\begin{split}
{^{C}_{a_i}}\textsl{D}^{\alpha_i(\cdot,\cdot)}_{t_i} & f(\textbf{t})
= {_{a_i}}\textsl{I}^{1-\alpha_i(\cdot,\cdot)}_{t_i} \frac{\partial}{\partial t_i}f(\textbf{t})\\
&=\int\limits_{a_i}^{t_i}
\frac{1}{\Gamma(1-\alpha_i(t_i,\tau))}(t_i-\tau)^{-\alpha_i(t_i,\tau)}\\
&\quad \times \frac{\partial}{\partial\tau}
f(t_1,\dots,t_{i-1},\tau,t_{i+1},\dots,t_n)d\tau \quad (t_i>a_i),
\end{split}
\end{equation*}
while the right Caputo partial derivative of variable fractional order
$\alpha_i(\cdot,\cdot)$, with respect to the $i$th variable $t_i$, is given by
\begin{equation*}
\begin{split}
{^{C}_{t_i}}\textsl{D}^{\alpha_i(\cdot,\cdot)}_{b_i} & f(\textbf{t})
=-{_{t_i}}\textsl{I}^{1-\alpha_i(\cdot,\cdot)}_{b_i}\frac{\partial}{\partial t_i} f(\textbf{t})\\
&=\int\limits_{t_i}^{b_i}\frac{-1}{\Gamma(1-\alpha_i(\tau,t_i))}
(\tau-t_i)^{-\alpha_i(\tau,t_i)}\\
&\quad \times \frac{\partial}{\partial\tau}
f(t_1,\dots,t_{i-1},\tau,t_{i+1},\dots,t_n)d\tau \quad (t_i<b_i).
\end{split}
\end{equation*}
\end{definition}

\begin{remark}
In Definitions~\ref{def:VOPI}, \ref{def:VOPRL} and \ref{def:VOPC},
all the variables except $t_i$ are kept fixed. That choice of fixed
values determines a function $f_{t_1,\dots,t_{i-1},t_{i+1},\dots,t_n}
:[a_i,b_i]\rightarrow \mathbb{R}$ of one variable $t_i$,
\begin{displaymath}
f_{t_1,\dots,t_{i-1},\dots,t_{i+1},\dots,t_n}(t_i)
=f(t_1,\dots,t_{i-1},t_i,t_{i+1},\dots,t_n).
\end{displaymath}
By Definitions~\ref{def:VORLI}, \ref{def:VORLD}, \ref{definition:Caputo},
\ref{def:VOPI}, \ref{def:VOPRL}, and \ref{def:VOPC}, we have
\begin{multline*}
{_{a_i}}\textsl{I}^{\alpha_i(\cdot,\cdot)}_{t_i}f_{t_1,\dots,t_{i-1},t_{i+1},\dots,t_n}(t_i)\\
={_{a_i}}\textsl{I}^{\alpha_i(\cdot,\cdot)}_{t_i}f(t_1,\dots,t_{i-1},t_i,t_{i+1},\dots,t_n),
\end{multline*}
\begin{multline*}
{_{t_i}}\textsl{I}^{\alpha_i(\cdot,\cdot)}_{b_i}f_{t_1,\dots,t_{i-1},t_{i+1},\dots,t_n}(t_i)\\
={_{t_i}}\textsl{I}^{\alpha_i(\cdot,\cdot)}_{b_i}f(t_1,\dots,t_{i-1},t_i,t_{i+1},\dots,t_n),
\end{multline*}
\begin{multline*}
{_{a_i}}\textsl{D}^{\alpha_i(\cdot,\cdot)}_{t_i}f_{t_1,\dots,t_{i-1},t_{i+1},\dots,t_n}(t_i)\\
={_{a_i}}\textsl{D}^{\alpha_i(\cdot,\cdot)}_{t_i}f(t_1,\dots,t_{i-1},t_i,t_{i+1},\dots,t_n),
\end{multline*}
\begin{multline*}
{_{t_i}}\textsl{D}^{\alpha_i(\cdot,\cdot)}_{b_i}f_{t_1,\dots,t_{i-1},t_{i+1},\dots,t_n}(t_i)\\
={_{t_i}}\textsl{D}^{\alpha_i(\cdot,\cdot)}_{b_i}f(t_1,\dots,t_{i-1},t_i,t_{i+1},\dots,t_n),
\end{multline*}
\begin{multline*}
{^{C}_{a_i}}\textsl{D}^{\alpha_i(\cdot,\cdot)}_{t_i}f_{t_1,\dots,t_{i-1},t_{i+1},\dots,t_n}(t_i)\\
={^{C}_{a_i}}\textsl{D}^{\alpha_i(\cdot,\cdot)}_{t_i}f(t_1,\dots,t_{i-1},t_i,t_{i+1},\dots,t_n),
\end{multline*}
\begin{multline*}
{^{C}_{t_i}}\textsl{D}^{\alpha_i(\cdot,\cdot)}_{b_i}f_{t_1,\dots,t_{i-1},t_{i+1},\dots,t_n}(t_i)\\
={^{C}_{t_i}}\textsl{D}^{\alpha_i(\cdot,\cdot)}_{b_i}f(t_1,\dots,t_{i-1},t_i,t_{i+1},\dots,t_n).
\end{multline*}
Thus, similarly to the integer order case, computation of partial derivatives of variable fractional
order is reduced to the computation of one-variable derivatives of variable fractional order.
\end{remark}

\begin{remark}
If $\alpha_i(\cdot,\cdot)$ is a constant function,
then the partial operators of variable fractional order
are reduced to corresponding partial integrals and derivatives of constant order.
For more information on the classical fractional partial operators
of constant order, we refer to \cite{book:Kilbas,book:Podlubny,book:Samko}.
\end{remark}


\section{GREEN'S THEOREM FOR VARIABLE ORDER FRACTIONAL OPERATORS}
\label{sec:MR}

Green's theorem is useful in many fields of mathematics, physics,
engineering, and fractional calculus \cite{MyID:236}.
We begin by proving a two-dimensional integration by parts formula
for partial integrals of variable fractional order.

\begin{theorem}
\label{theorem:GRI}
Let $\frac{1}{l_i}<\alpha_i(t_i,\tau)<1$ for all $t_i, \tau \in [a_i,b_i]$,
where $l_i\in\mathbb{N}$, $i=1,2$, are greater or equal than two.
If $f,g,\eta_1,\eta_2\in C\left(\Delta_2\right)$, then the partial integrals
of variable fractional order satisfy the following identity:
\begin{multline*}
\int\limits_{a_1}^{b_1}\int\limits_{a_2}^{b_2}\left[
g(\mathbf{t}){_{a_1}}\textsl{I}^{\alpha_1(\cdot,\cdot)}_{t_1}\eta_1(\mathbf{t})
+f(\mathbf{t}){_{a_2}}\textsl{I}^{\alpha_2(\cdot,\cdot)}_{t_2}\eta_2(\mathbf{t})\right]dt_2 dt_1 \\
=\int\limits_{a_1}^{b_1}\int\limits_{a_2}^{b_2} \left[
\eta_1(\mathbf{t}){_{t_1}}\textsl{I}^{\alpha_1(\cdot,\cdot)}_{b_1}g(\mathbf{t})
+\eta_2(\mathbf{t}){_{t_2}}\textsl{I}^{\alpha_2(\cdot,\cdot)}_{b_2}f(\mathbf{t})\right]dt_2 dt_1.
\end{multline*}
\end{theorem}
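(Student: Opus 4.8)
The plan is to use linearity of the integral to split the asserted identity into two independent pieces — one built from the pair $(g,\eta_1)$ and acting in the variable $t_1$, the other from $(f,\eta_2)$ and acting in $t_2$ — and to reduce each piece, with the remaining variable frozen, to a single one-dimensional integration by parts formula. The basic ingredient I would isolate first is the following lemma: if $\tfrac1l<\alpha(t,\tau)<1$ on $[a,b]^2$ with $l\in\mathbb N$, $l\ge 2$, and $h,\phi\in C[a,b]$, then
\begin{equation*}
\int_a^b h(t)\,{_{a}}\textsl{I}^{\alpha(\cdot,\cdot)}_{t}\phi(t)\,dt
=\int_a^b \phi(t)\,{_{t}}\textsl{I}^{\alpha(\cdot,\cdot)}_{b}h(t)\,dt .
\end{equation*}
To prove it, I would write the left-hand side as the iterated integral $\int_a^b h(t)\int_a^t \Gamma(\alpha(t,\tau))^{-1}(t-\tau)^{\alpha(t,\tau)-1}\phi(\tau)\,d\tau\,dt$, view it as an integral over the triangle $\{(t,\tau):a\le\tau\le t\le b\}$, interchange the order of integration by Fubini's theorem, and recognize the resulting inner integral $\int_\tau^b \Gamma(\alpha(t,\tau))^{-1}(t-\tau)^{\alpha(t,\tau)-1}h(t)\,dt$ as ${_{\tau}}\textsl{I}^{\alpha(\cdot,\cdot)}_{b}h(\tau)$; relabelling the outer integration variable then gives the right-hand side.

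Granting the lemma, the theorem is essentially bookkeeping. For the $t_1$-term I would first use a Fubini swap on the compact rectangle $\Delta_2$ to rewrite $\int_{a_1}^{b_1}\!\int_{a_2}^{b_2} g\,{_{a_1}}\textsl{I}^{\alpha_1(\cdot,\cdot)}_{t_1}\eta_1\,dt_2\,dt_1$ as $\int_{a_2}^{b_2}\!\int_{a_1}^{b_1} g\,{_{a_1}}\textsl{I}^{\alpha_1(\cdot,\cdot)}_{t_1}\eta_1\,dt_1\,dt_2$, then, for each fixed $t_2$, apply the lemma in the variable $t_1$ to the slice functions $t_1\mapsto g(t_1,t_2)$ and $t_1\mapsto\eta_1(t_1,t_2)$ — which is legitimate by the identification of partial operators with one-dimensional operators on slices recorded in the Remark preceding the theorem — obtaining $\int_{a_2}^{b_2}\!\int_{a_1}^{b_1}\eta_1\,{_{t_1}}\textsl{I}^{\alpha_1(\cdot,\cdot)}_{b_1}g\,dt_1\,dt_2$, and finally a Fubini swap back. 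The $t_2$-term is treated symmetrically, with the roles of $t_1$ and $t_2$ exchanged. Adding the two resulting identities yields the claim.

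The only genuine technical point is the justification of the interchanges of integration order, which is also the only place the lower bounds $\alpha_i>1/l_i$ enter. On the relevant triangular domain, $|h(t)\phi(\tau)|$ (or the analogous product of two of $f,g,\eta_1,\eta_2$) is bounded by continuity on a compact set, $\Gamma(\alpha(t,\tau))^{-1}\le 1$ because $\Gamma\ge 1$ on $(0,1]$, and the singular factor satisfies $(t-\tau)^{\alpha(t,\tau)-1}\le\max\{1,(t-\tau)^{1/l-1}\}$, which is integrable over the triangle since $1/l-1>-1$. Hence the integrand is absolutely integrable and every application of Fubini's theorem above — including the ``trivial'' rectangle swaps — is valid. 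Once this is in place, nothing remains but the relabelling of dummy variables and the reassembly of the partial operators from their one-dimensional counterparts; I expect the absolute-integrability verification to be the main obstacle, and it is a mild one.
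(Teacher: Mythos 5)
Your proposal is correct and follows essentially the same route as the paper's proof: justify the interchange of the order of integration by Fubini's theorem, using boundedness of the continuous functions and the kernel estimate obtained from $\alpha_i>\tfrac{1}{l_i}$ by splitting at $t_i-\tau=1$, and then recognize the swapped inner integral as the right-sided variable order integral. The only differences are organizational — you factor out a one-dimensional integration by parts lemma and apply it to slices, and you bound $1/\Gamma(\alpha_i)\le 1$ via $\Gamma\ge 1$ on $(0,1]$, whereas the paper handles the double integrals directly and obtains the same Gamma bound from Ivady's inequality $\Gamma(x+1)\ge\frac{x^2+1}{x+1}$.
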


\begin{proof}
Define
\begin{equation*}
F_1(\mathbf{t},\tau)
:= \begin{cases}{}
\left|\frac{(t_1-\tau)^{\alpha_1(t_1,\tau)-1}}{\Gamma(\alpha_1(t_1,\tau))}
g(\mathbf{t})\eta_1(\tau,t_2)\right|
& \mbox{if $\tau \leq t_1$}\\
0 & \mbox{if $\tau > t_1$}
\end{cases}
\end{equation*}
for all $(\mathbf{t},\tau)\in [a_1,b_1]\times [a_2,b_2]\times [a_1,b_1]$, and
\begin{equation*}
F_2(\mathbf{t},\tau) :=
\begin{cases}
\left|\frac{(t_2-\tau)^{\alpha_2(t_2,\tau)-1}}{\Gamma(\alpha_2(t_2,\tau))}
f(\mathbf{t})\eta_2(t_2,\tau)\right|
& \mbox{if $\tau \leq t_2$}\\
0 & \mbox{if $\tau > t_2$}
\end{cases}
\end{equation*}
for all $(\mathbf{t},\tau)\in [a_1,b_1]\times [a_2,b_2]\times [a_2,b_2]$.
Since $f,g$ and $\eta_i$, $i=1,2$, are continuous functions on $\Delta_2$,
they are bounded on $\Delta_2$, \textrm{i.e.},
there exist positive real numbers $C_1$, $C_2$, $C_3$, $C_4>0$ such that
\begin{displaymath}
\left|f(\mathbf{t})\right|\leq C_1,~\left|g(\mathbf{t})\right|
\leq C_2,~\left|\eta_1(\mathbf{t})\right|
\leq C_3,~\left|\eta_2(\mathbf{t})\right| \leq C_4
\end{displaymath}
for all $\mathbf{t}\in \Delta_2$. Therefore,
\begin{multline*}
\int_{a_1}^{b_1}\Biggl(\int_{a_2}^{b_2}\Biggl(\int_{a_1}^{b_1}
F_1(\mathbf{t},\tau)d\tau+ \int_{a_2}^{b_2} F_2(\mathbf{t},\tau)d\tau\Biggr)dt_2 \Biggr)dt_1\\
=\int_{a_1}^{b_1}\Biggl(\int_{a_2}^{b_2}\Biggl(\int_{a_1}^{t_1}
\Biggl|\frac{(t_1-\tau)^{\alpha_1(t_1,\tau)-1}}{\Gamma(\alpha_1(t_1,\tau))}
g(\mathbf{t})\eta_1(\tau,t_2)\Biggr| d\tau \\
+\int_{a_2}^{t_2} \Biggl|
\frac{(t_2-\tau)^{\alpha_2(t_2,\tau)-1}}{\Gamma(\alpha_2(t_2,\tau))}
f(\mathbf{t})\eta_2(t_1,\tau)\Biggr| d\tau\Biggr)dt_2 \Biggr)dt_1 \\
\leq \int_{a_1}^{b_1}\Biggl(\int_{a_2}^{b_2}\Biggl(C_2 C_3\int_{a_1}^{t_1}
\Biggl|\frac{1}{\Gamma(\alpha_1(t_1,\tau))}(t_1-\tau)^{\alpha_1(t_1,\tau)-1}\Biggr| d\tau \\
+C_1 C_4\int_{a_2}^{t_2} \Biggl|\frac{(t_2-\tau)^{\alpha_2(t_2,\tau)-1}}{\Gamma(\alpha_2(t_2,\tau))}\Biggr|
d\tau\Biggr)dt_2 \Biggr)dt_1.
\end{multline*}
Because $\frac{1}{l_i}<\alpha_i(t_i,\tau)<1$, $i=1,2$,
\begin{enumerate}
\item $\ln(t_i-\tau)\geq 0$  and
$(t_i-\tau)^{\alpha_i(t_i,\tau)-1}<1$
for $t_i-\tau \geq 1$;

\item while $\ln(t_i-\tau)<0$ and
$(t_i-\tau)^{\alpha_i(t_i,\tau)-1}<(t_i-\tau)^{\frac{1}{l_i}-1}$
for $t_i-\tau < 1$.
\end{enumerate}
Therefore,
\begin{multline*}
\int_{a_1}^{b_1}\Biggl(\int_{a_2}^{b_2}\Biggl(C_2 C_3\int_{a_1}^{t_1}
\left|\frac{1}{\Gamma(\alpha_1(t_1,\tau))}(t_1-\tau)^{\alpha_1(t_1,\tau)-1}\right| d\tau \\
+C_1 C_4\int_{a_2}^{t_2} \left|\frac{1}{\Gamma(\alpha_2(t_2,\tau))}(t_2-\tau)^{\alpha_2(t_2,\tau)-1}\right|
d\tau\Biggr)dt_2 \Biggr)dt_1\\
<  \int_{a_1}^{b_1}\Biggl(\int_{a_2}^{b_2}\Biggl(
C_2 C_3\Biggl(\int_{a_1}^{t_1-1}\frac{1}{\Gamma(\alpha_1(t_1,\tau))} d\tau\\
+\int_{t_1-1}^{t_1}\frac{1}{\Gamma(\alpha_1(t_1,\tau))}(t_1-\tau)^{\frac{1}{l_1}-1}d\tau\Biggr)\\
+C_1 C_4\Biggl(\int_{a_2}^{t_2-1}\frac{1}{\Gamma(\alpha_2(t_2,\tau))} d\tau
\\+\int_{t_2-1}^{t_2}\frac{1}{\Gamma(\alpha_2(t_2,\tau))}(t_2
-\tau)^{\frac{1}{l_2}-1}d\tau\Biggr)\Biggr)dt_2\Biggr)dt_1.
\end{multline*}
Moreover, by inequality
\begin{equation*}
\Gamma(x+1) \geq \frac{x^2+1}{x+1},
\end{equation*}
valid for $x \in [0,1]$ (see \cite{Ivady}),
and the property
$$
\Gamma(x+1) = x \Gamma(x)
$$
of the Gamma function, one has
\begin{multline*}
\int_{a_1}^{b_1}\Biggl(\int_{a_2}^{b_2}\Biggl(C_2 C_3\Biggl(
\int_{a_1}^{t_1-1}\frac{1}{\Gamma(\alpha_1(t_1,\tau))} d\tau\\
+\int_{t_1-1}^{t_1}\frac{1}{\Gamma(\alpha_1(t_1,\tau))}(t_1-\tau)^{\frac{1}{l_1}-1}d\tau\Biggr)\\
+C_1 C_4\Biggl(\int_{a_2}^{t_2-1}\frac{1}{\Gamma(\alpha_2(t_2,\tau))} d\tau\\
+\int_{t_2-1}^{t_2}\frac{1}{\Gamma(\alpha_2(t_2,\tau))}(
t_2-\tau)^{\frac{1}{l_2}-1}d\tau\Biggr)\Biggr) dt_2\Biggr) dt_1\\
\leq \int_{a_1}^{b_1}\Biggl(\int_{a_2}^{b_2}\Biggl(C_2 C_3\Biggl(
\int_{a_1}^{t_1-1}\frac{\alpha_1^2(t_1,\tau)+\alpha_1(t_1,\tau)}{\alpha_1^2(t_1,\tau)+1} d\tau\\
+\int_{t_1-1}^{t_1}\frac{\alpha_1^2(t_1,\tau)+\alpha_1(t_1,\tau)}{\alpha_1^2(t_1,\tau)
+1}(t_1-\tau)^{\frac{1}{l_1}-1}d\tau\Biggr)\\
+C_1 C_4\Biggl(\int_{a_2}^{t_2-1}\frac{\alpha_2^2(t_2,\tau)
+\alpha_2(t_2,\tau)}{\alpha_2^2(t_2,\tau)+1} d\tau\\
+\int_{t_2-1}^{t_2}\frac{\alpha_2^2(t_2,\tau)+\alpha_2(t_2,\tau)}{\alpha_2^2(t_2,\tau)
+1}(t_2-\tau)^{\frac{1}{l_2}-1}d\tau\Biggr)\Biggr)dt_2\Biggr)dt_1\\
<\int_{a_1}^{b_1}\Biggl(\int_{a_2}^{b_2}\biggl(C_2 C_3\Biggl(\int_{a_1}^{t_1-1} d\tau
+\int_{t_1-1}^{t_1}(t_1-\tau)^{\frac{1}{l_1}-1}d\tau\Biggr)\\
+C_1 C_4\Biggl(\int_{a_2}^{t_2-1} d\tau+\int_{t_2-1}^{t_2}(t_2-\tau)^{\frac{1}{l_2}
-1}d\tau\Biggr)\Biggr)dt_2\Biggr)dt_1\\
<(b_2-a_2)(b_1-a_1)\Biggl[C_2 C_3\Biggl(\frac{b_1+a_1}{2}-1-a_1+l_1\Biggr)\\
+C_1 C_4\Biggl(\frac{b_2+a_2}{2}-1-a_2+l_2\Biggr)\Biggr]<\infty.
\end{multline*}
Hence, we can use Fubini's theorem to change
the order of integration:
\begin{multline*}
\int_{a_1}^{b_1}\int_{a_2}^{b_2}\Biggl[g(t_1,t_2){_{a_1}}
\textsl{I}^{\alpha_1(\cdot,\cdot)}_{t_1}\eta_1(t_1,t_2)\\
+f(t_1,t_2){_{a_2}}\textsl{I}^{\alpha_2(\cdot,\cdot)}_{t_2}
\eta_2(t_1,t_2)\Biggr]dt_2 dt_1\\
= \int_{a_1}^{b_1}\int_{a_2}^{b_2}\Biggl[
g(t_1,t_2)\int_{a_1}^{t_1}\frac{1}{\Gamma(\alpha_1(t_1,\tau))}\\
(t_1-\tau)^{\alpha_1(t_1,\tau)-1}\eta_1(\tau,t_2)d\tau\\
+f(t_1,t_2)\int_{a_2}^{t_2}\frac{1}{\Gamma(\alpha_2(t_2,\tau))}\\
(t_2-\tau)^{\alpha_2(t_2,\tau)-1}\eta_2(t_1,\tau)d\tau \Biggr]dt_2 dt_1\\
= \int_{a_1}^{b_1}\int_{a_2}^{b_2}\biggl[\eta_1(\tau,t_2)
\int_{\tau}^{b_1}\frac{1}{\Gamma(\alpha_1(t_1,\tau))}\\
(t_1-\tau)^{\alpha_1(t_1,\tau)-1}g(t_1,t_2)dt_1 \Biggr]dt_2 d\tau\\
+\int_{a_1}^{b_1}\int_{a_2}^{b_2}\biggl[\eta_2(t_1,\tau)
\int_{\tau}^{b_2}\frac{1}{\Gamma(\alpha_2(t_2,\tau))}\\
(t_2-\tau)^{\alpha_2(t_2,\tau)-1}f(t_1,t_2)dt_2 \Biggr]d\tau dt_1\\
=\int_{a_1}^{b_1}\int_{a_2}^{b_2}\eta_1(\tau,t_2){_{\tau}}
\textsl{I}^{\alpha_1(\cdot,\cdot)}_{b_1}g(\tau,t_2) dt_2 d\tau \\
+ \int_{a_1}^{b_1}\int_{a_2}^{b_2}\eta_2(t_1,\tau){_{\tau}}
\textsl{I}^{\alpha_2(\cdot,\cdot)}_{b_2}f(t_1,\tau) d\tau dt_1.
\end{multline*}
\end{proof}

We are now in conditions to state and prove
the Green theorem for derivatives of variable fractional order.

\begin{theorem}
\label{thm:GTD}
Let $0<\alpha_i(t_i,\tau)<1-\frac{1}{l_i}$ for all $t_i, \tau \in [a_i,b_i]$,
where $l_i\in\mathbb{N}$, $i=1,2$, are greater or equal than two.
If $f,g,\eta\in C^1\left(\Delta_2\right)$ and
${_{t_1}}\textsl{I}^{1-\alpha_1(\cdot,\cdot)}_{b_1}g,
{_{t_2}}\textsl{I}^{1-\alpha_2(\cdot,\cdot)}_{b_2}f\in C^1\left(\Delta_2\right)$,
then the following formula holds:
\begin{multline*}
\int_{a_1}^{b_1}\int_{a_2}^{b_2}\left[
g(\mathbf{t}){^{C}_{a_1}}\textsl{D}^{\alpha_1(\cdot,\cdot)}_{t_1}\eta(\mathbf{t})+f(\mathbf{t})
{^{C}_{a_2}}\textsl{D}^{\alpha_2(\cdot,\cdot)}_{t_2}\eta(\mathbf{t})\right]dt_2 dt_1\\
=\int_{a_1}^{b_1}\int_{a_2}^{b_2}\eta(\mathbf{t})\left[
{_{t_1}}\textsl{D}^{\alpha_1(\cdot,\cdot)}_{b_1}g(\mathbf{t})
+{_{t_2}}\textsl{D}^{\alpha_2(\cdot,\cdot)}_{b_2}f(\mathbf{t})\right]dt_2 dt_1\\
+\oint_{\partial\Delta_2}\eta(\mathbf{t})\left[{_{t_1}}\textsl{I}^{1-\alpha_1(\cdot,\cdot)}_{b_1}
g(\mathbf{t})dt_2-{_{t_2}}\textsl{I}^{1-\alpha_2(\cdot,\cdot)}_{b_2}f(\mathbf{t})dt_1\right].
\end{multline*}
\end{theorem}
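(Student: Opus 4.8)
The plan is to reduce the Caputo derivatives on the left-hand side to Riemann--Liouville integrals acting on the ordinary partial derivatives of $\eta$, apply the integration by parts formula for variable order partial integrals (Theorem~\ref{theorem:GRI}), and then undo that reduction on the right by integrating by parts in the classical sense, which produces the boundary term via Green's theorem for integer-order derivatives. First I would observe that, by definition of the left Caputo partial derivatives,
\[
{^{C}_{a_1}}\textsl{D}^{\alpha_1(\cdot,\cdot)}_{t_1}\eta(\mathbf{t})
={_{a_1}}\textsl{I}^{1-\alpha_1(\cdot,\cdot)}_{t_1}\partial_1\eta(\mathbf{t}),
\qquad
{^{C}_{a_2}}\textsl{D}^{\alpha_2(\cdot,\cdot)}_{t_2}\eta(\mathbf{t})
={_{a_2}}\textsl{I}^{1-\alpha_2(\cdot,\cdot)}_{t_2}\partial_2\eta(\mathbf{t}),
\]
so the left-hand side integrand is
$g(\mathbf{t}){_{a_1}}\textsl{I}^{1-\alpha_1(\cdot,\cdot)}_{t_1}\partial_1\eta(\mathbf{t})
+f(\mathbf{t}){_{a_2}}\textsl{I}^{1-\alpha_2(\cdot,\cdot)}_{t_2}\partial_2\eta(\mathbf{t})$.
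Since $\partial_i\eta\in C(\Delta_2)$ and, with the hypothesis $0<\alpha_i<1-\tfrac{1}{l_i}$, the orders $1-\alpha_i(t_i,\tau)$ satisfy $\tfrac{1}{l_i}<1-\alpha_i(t_i,\tau)<1$, Theorem~\ref{theorem:GRI} applies with the roles of $\eta_i$ played by $\partial_i\eta$ and with exponents $1-\alpha_i$ in place of $\alpha_i$. This gives
\[
\int_{a_1}^{b_1}\!\!\int_{a_2}^{b_2}\!\!\Bigl[g\,{_{a_1}}\textsl{I}^{1-\alpha_1(\cdot,\cdot)}_{t_1}\partial_1\eta
+f\,{_{a_2}}\textsl{I}^{1-\alpha_2(\cdot,\cdot)}_{t_2}\partial_2\eta\Bigr]dt_2\,dt_1
=\int_{a_1}^{b_1}\!\!\int_{a_2}^{b_2}\!\!\Bigl[\partial_1\eta\,{_{t_1}}\textsl{I}^{1-\alpha_1(\cdot,\cdot)}_{b_1}g
+\partial_2\eta\,{_{t_2}}\textsl{I}^{1-\alpha_2(\cdot,\cdot)}_{b_2}f\Bigr]dt_2\,dt_1.
\]

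Next I would treat the right-hand side of this identity term by term using classical integration by parts, legitimate because the hypotheses guarantee ${_{t_1}}\textsl{I}^{1-\alpha_1(\cdot,\cdot)}_{b_1}g$ and ${_{t_2}}\textsl{I}^{1-\alpha_2(\cdot,\cdot)}_{b_2}f$ are $C^1$ on $\Delta_2$ (and $\eta\in C^1$). For the first term, integrate in $t_1$ for fixed $t_2$: writing $G(\mathbf{t}):={_{t_1}}\textsl{I}^{1-\alpha_1(\cdot,\cdot)}_{b_1}g(\mathbf{t})$ we have $\int_{a_1}^{b_1}(\partial_1\eta)\,G\,dt_1=[\eta\,G]_{t_1=a_1}^{t_1=b_1}-\int_{a_1}^{b_1}\eta\,\partial_1 G\,dt_1$, and $\partial_1 G={_{t_1}}\textsl{D}^{\alpha_1(\cdot,\cdot)}_{b_1}g$ by Definition~\ref{def:VOPRL} (up to the sign built into that definition — here one must be careful: $\partial_1\bigl({_{t_1}}\textsl{I}^{1-\alpha_1}_{b_1}g\bigr)=-{_{t_1}}\textsl{D}^{\alpha_1}_{b_1}g$, so $-\int\eta\,\partial_1 G\,dt_1=\int\eta\,{_{t_1}}\textsl{D}^{\alpha_1}_{b_1}g\,dt_1$). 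Symmetrically, for the second term integrate in $t_2$ for fixed $t_1$: $\int_{a_2}^{b_2}(\partial_2\eta)\,F\,dt_2=[\eta\,F]_{t_2=a_2}^{t_2=b_2}-\int_{a_2}^{b_2}\eta\,\partial_2 F\,dt_2$ with $F={_{t_2}}\textsl{I}^{1-\alpha_2}_{b_2}f$ and $\partial_2 F=-{_{t_2}}\textsl{D}^{\alpha_2}_{b_2}f$. Collecting the two interior integrals yields exactly $\int\!\!\int\eta\bigl[{_{t_1}}\textsl{D}^{\alpha_1(\cdot,\cdot)}_{b_1}g+{_{t_2}}\textsl{D}^{\alpha_2(\cdot,\cdot)}_{b_2}f\bigr]dt_2\,dt_1$, which is the first term on the right-hand side of the claimed formula.

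It remains to assemble the boundary contributions into the line integral over $\partial\Delta_2$. The leftover pieces are $\int_{a_2}^{b_2}\bigl[\eta(b_1,t_2)G(b_1,t_2)-\eta(a_1,t_2)G(a_1,t_2)\bigr]dt_2$ and $\int_{a_1}^{b_1}\bigl[\eta(t_1,b_2)F(t_1,b_2)-\eta(t_1,a_2)F(t_1,a_2)\bigr]dt_1$. I would recognize these as precisely $\iint_{\Delta_2}\bigl[\partial_1(\eta G)+\partial_2(\eta F)\bigr]dt_2\,dt_1$ by the fundamental theorem of calculus slice-by-slice, i.e. as the flux form of Green's theorem for the vector field $(\eta G,\,\eta F)$; equivalently, and in the orientation used in the statement, $\oint_{\partial\Delta_2}\eta\bigl(G\,dt_2-F\,dt_1\bigr)$. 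Matching the sign conventions on a rectangle traversed counterclockwise gives the stated boundary term $\oint_{\partial\Delta_2}\eta(\mathbf{t})\bigl[{_{t_1}}\textsl{I}^{1-\alpha_1(\cdot,\cdot)}_{b_1}g(\mathbf{t})\,dt_2-{_{t_2}}\textsl{I}^{1-\alpha_2(\cdot,\cdot)}_{b_2}f(\mathbf{t})\,dt_1\bigr]$, completing the proof. The main obstacle I anticipate is bookkeeping rather than analysis: carefully tracking the sign in $\partial_{t_i}\bigl({_{t_i}}\textsl{I}^{1-\alpha_i}_{b_i}\cdot\bigr)=-{_{t_i}}\textsl{D}^{\alpha_i}_{b_i}(\cdot)$ through both the integration by parts and the orientation of $\partial\Delta_2$, and confirming that the regularity hypotheses ($f,g,\eta\in C^1$ together with the $C^1$ assumption on the two fractional integrals) are exactly what is needed to justify differentiating under the integral sign, the classical integration by parts, and the applicability of Theorem~\ref{theorem:GRI} with order $1-\alpha_i\in(\tfrac{1}{l_i},1)$.
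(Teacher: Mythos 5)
Your proposal is correct and follows essentially the same route as the paper's proof: rewrite the Caputo derivatives via Definition~\ref{def:VOPC}, apply Theorem~\ref{theorem:GRI} with the orders $1-\alpha_i\in(\tfrac{1}{l_i},1)$ (which is exactly why the hypothesis $0<\alpha_i<1-\tfrac{1}{l_i}$ is imposed), and then pass the ordinary partial derivatives off $\eta$ using the classical Green's theorem together with the identity $\partial_{t_i}\bigl({_{t_i}}\textsl{I}^{1-\alpha_i(\cdot,\cdot)}_{b_i}\cdot\bigr)=-{_{t_i}}\textsl{D}^{\alpha_i(\cdot,\cdot)}_{b_i}(\cdot)$. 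The only cosmetic difference is that you unpack the Green's theorem step into slice-wise one-dimensional integrations by parts and reassemble the boundary terms, whereas the paper cites the standard Green's theorem directly; the substance and the sign bookkeeping are identical.
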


\begin{proof}
By definition of Caputo partial derivative of variable fractional order,
Theorem~\ref{theorem:GRI}, and the standard Green's theorem, one has
\begin{multline*}
\int_{a_1}^{b_1}\int_{a_2}^{b_2}\Biggl[
g(\mathbf{t}){^{C}_{a_1}}\textsl{D}^{\alpha_1(\cdot,\cdot)}_{t_1}\eta(\mathbf{t})
+f(\mathbf{t}) {^{C}_{a_2}}\textsl{D}^{\alpha_2(\cdot,\cdot)}_{t_2}\eta(\mathbf{t})\Biggr]dt_2 dt_1\\
=\int_{a_1}^{b_1}\int_{a_2}^{b_2}\Biggl[g(\mathbf{t}){_{a_1}}\textsl{I}^{1
-\alpha_1(\cdot,\cdot)}_{t_1}\frac{\partial}{\partial t_1}\eta(\mathbf{t})\\
+f(\mathbf{t}) {_{a_2}}\textsl{I}^{1-\alpha_2(\cdot,\cdot)}_{t_2}
\frac{\partial}{\partial t_2}\eta(\mathbf{t})\Biggr]dt_2 dt_1\\
=\int_{a_1}^{b_1}\int_{a_2}^{b_2}\Biggl[\frac{\partial}{\partial t_1}
\eta(\mathbf{t}){_{t_1}}\textsl{I}^{1-\alpha_1(\cdot,\cdot)}_{b_1}g(\mathbf{t})\\
+\frac{\partial}{\partial t_2}\eta(\mathbf{t}) {_{t_2}}\textsl{I}^{1
-\alpha_2(\cdot,\cdot)}_{b_2}f(\mathbf{t})\Biggr]dt_2 dt_1\\
=-\int_{a_1}^{b_1}\int_{a_2}^{b_2}\eta(\mathbf{t})\Biggl[
\frac{\partial}{\partial t_1}{_{t_1}}\textsl{I}^{1-\alpha_1(\cdot,\cdot)}_{b_1}g(\mathbf{t})\\
+\frac{\partial}{\partial t_2}{_{t_2}}\textsl{I}^{1-\alpha_2(\cdot,\cdot)}_{b_2}f(\mathbf{t})\Biggr]dt_2 dt_1\\
+\oint_{\partial\Delta_2}\eta(\mathbf{t})\left[{_{t_1}}\textsl{I}^{1-\alpha_1(\cdot,\cdot)}_{b_1}
g(\mathbf{t})dt_2-{_{t_2}}\textsl{I}^{1-\alpha_2(\cdot,\cdot)}_{b_2}f(\mathbf{t})dt_1\right]\\
=\int_{a_1}^{b_1}\int_{a_2}^{b_2}\eta(\mathbf{t})\left[{_{t_1}}\textsl{D}^{\alpha_1(\cdot,\cdot)}_{b_1}
g(\mathbf{t})+{_{t_2}}\textsl{D}^{\alpha_2(\cdot,\cdot)}_{b_2}f(\mathbf{t})\right]dt_2 dt_1\\
+\oint_{\partial\Delta_2}\eta(\mathbf{t})\left[{_{t_1}}\textsl{I}^{1-\alpha_1(\cdot,\cdot)}_{b_1}
g(\mathbf{t})dt_2-{_{t_2}}\textsl{I}^{1-\alpha_2(\cdot,\cdot)}_{b_2}f(\mathbf{t})dt_1\right].
\end{multline*}
\end{proof}


\section{VARIABLE ORDER FRACTIONAL CALCULUS OF VARIATIONS FOR DOUBLE INTEGRALS}
\label{sec:VOFVC}

Let $\alpha_i(t_i,\tau)$, $i=1,2$,
satisfy the assumptions of Theorem~\ref{thm:GTD}.
We consider the following problem:
\begin{problem}
\label{problem:Fundamental}
Find a function $u=u(\mathbf{t})$ for which
the fractional variational functional
\begin{equation*}
\mathcal{J}[u]
=\int_{a_1}^{b_1}\int_{a_2}^{b_2}
L\left(\mathbf{t},u(\mathbf{t}),{^{C}_{a_1}}\textsl{D}^{\alpha_1(\cdot,\cdot)}_{t_1}
u(\mathbf{t}),{^{C}_{a_2}}\textsl{D}^{\alpha_2(\cdot,\cdot)}_{t_2}u(\mathbf{t})\right)dt_2 dt_1
\end{equation*}
subject to the boundary condition
\begin{equation}\label{eq:FundBound}
\left.u(\mathbf{t})\right|_{\partial \Delta_2}=\psi(\mathbf{t}),
\end{equation}
where $\psi:\partial \Delta_2 \rightarrow \mathbb{R}$ is a given function, attains an extremum.
\end{problem}
We assume that $L\in C^1\left(\Delta_2\times \mathbb{R}^3;\mathbb{R}\right)$;
$t\mapsto\partial_{i+2}L$ is continuously differentiable, has continuously differentiable integral
${_{t_i}}\textsl{I}^{1-\alpha_i(\cdot,\cdot)}_{b_i}$, and continuous derivative
${_{t_i}}\textsl{D}^{\alpha_i(\cdot,\cdot)}_{b_i}$, $i=1,2$.
For simplicity of notation, we introduce the following operator:
\begin{displaymath}
\left\{u,\alpha_1,\alpha_2\right\}(\mathbf{t})
:=\left(\mathbf{t},u(\mathbf{t}),{^{C}_{a_1}}\textsl{D}^{\alpha_1(\cdot,\cdot)}_{t_1}
u(\mathbf{t}),{^{C}_{a_2}}\textsl{D}^{\alpha_2(\cdot,\cdot)}_{t_2}u(\mathbf{t})\right).
\end{displaymath}

A typical example for the cost
functional $\mathcal{J}$ appears when one considers
the shape of a string during the course of the vibration
(\textrm{cf.} \cite{MyID:182}):
\begin{multline*}
\mathcal{J}[u] =\int_{t}^{x}\int_{0}^{L}
\left[ \sigma(x) \left({^{C}_{a_2}}\textsl{D}^{\alpha_2(\cdot,\cdot)}_{x}
u(x,t)\right)^2\right.\\
\left.-\tau \left({^{C}_{a_1}}\textsl{D}^{\alpha_1(\cdot,\cdot)}_{t}
u(x,t)\right)^2 \right] dx dt,
\end{multline*}
where $\tau$ is the constant tension and
$\sigma(x)$ is the string density.

\begin{definition}
A continuously differentiable function $u$ is said to be
admissible for Problem~\ref{problem:Fundamental}
if ${^{C}_{a_i}}\textsl{D}^{\alpha_i(\cdot,\cdot)}_{t_i}u$ exist and are continuous on the
rectangle $\Delta_2$, $i=1,2$, and $u$ satisfies the boundary condition \eqref{eq:FundBound}.
\end{definition}

\begin{theorem}
\label{thm:ELCaputo}
If $u$ is a solution to Problem~\ref{problem:Fundamental},
then $u$ satisfies the Euler--Lagrange equation
\begin{multline}
\label{eq:eqELCaputo}
\partial_{2} L\left\{u,\alpha_1,\alpha_2\right\}(\mathbf{t})
+{_{t_1}}\textsl{D}^{\alpha_1(\cdot,\cdot)}_{b_1}\partial_{3}
L\left\{u,\alpha_1,\alpha_2\right\}(\mathbf{t})\\
+{_{t_2}}\textsl{D}^{\alpha_2(\cdot,\cdot)}_{b_2}\partial_{4}
L\left\{u,\alpha_1,\alpha_2\right\}(\mathbf{t})=0,
\quad \mathbf{t}\in\Delta_2.
\end{multline}
\end{theorem}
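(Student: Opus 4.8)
The plan is to carry out the classical first-variation argument, with Theorem~\ref{thm:GTD} taking the place of integration by parts. First I would fix an arbitrary variation $\eta\in C^1(\Delta_2)$ with $\eta|_{\partial\Delta_2}=0$. By linearity of the Caputo partial derivatives of variable order, $u+\epsilon\eta$ is admissible for every $\epsilon\in\mathbb{R}$: it is $C^1$, its derivatives ${^{C}_{a_i}}\textsl{D}^{\alpha_i(\cdot,\cdot)}_{t_i}(u+\epsilon\eta)={^{C}_{a_i}}\textsl{D}^{\alpha_i(\cdot,\cdot)}_{t_i}u+\epsilon\,{^{C}_{a_i}}\textsl{D}^{\alpha_i(\cdot,\cdot)}_{t_i}\eta$ are continuous on $\Delta_2$, and it satisfies \eqref{eq:FundBound} since $\eta$ vanishes on the boundary. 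Setting $j(\epsilon):=\mathcal{J}[u+\epsilon\eta]$, the hypothesis that $u$ solves Problem~\ref{problem:Fundamental} gives that $j$ has an extremum at $\epsilon=0$, hence $j'(0)=0$ once $j$ is shown to be differentiable there.

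Next I would differentiate $j$ under the double integral, which is legitimate because $L\in C^1$, the rectangle $\Delta_2$ is compact, the fractional operators applied to $\eta\in C^1$ produce continuous functions, and the integrand together with its $\epsilon$-derivative is jointly continuous in $(\mathbf{t},\epsilon)$ on the compact set $\Delta_2\times[-1,1]$. The chain rule then yields
\begin{multline*}
0=j'(0)=\int_{a_1}^{b_1}\int_{a_2}^{b_2}\Bigl[\partial_{2}L\left\{u,\alpha_1,\alpha_2\right\}(\mathbf{t})\,\eta(\mathbf{t})
+\partial_{3}L\left\{u,\alpha_1,\alpha_2\right\}(\mathbf{t})\,{^{C}_{a_1}}\textsl{D}^{\alpha_1(\cdot,\cdot)}_{t_1}\eta(\mathbf{t})\\
+\partial_{4}L\left\{u,\alpha_1,\alpha_2\right\}(\mathbf{t})\,{^{C}_{a_2}}\textsl{D}^{\alpha_2(\cdot,\cdot)}_{t_2}\eta(\mathbf{t})\Bigr]dt_2\,dt_1.
\end{multline*}
Now I would apply Theorem~\ref{thm:GTD} with $g:=\partial_{3}L\left\{u,\alpha_1,\alpha_2\right\}$ and $f:=\partial_{4}L\left\{u,\alpha_1,\alpha_2\right\}$; the standing assumptions on $L$ stated after Problem~\ref{problem:Fundamental} (that $\mathbf{t}\mapsto\partial_{i+2}L$ is $C^1$, has $C^1$ integral ${_{t_i}}\textsl{I}^{1-\alpha_i(\cdot,\cdot)}_{b_i}$ and continuous derivative ${_{t_i}}\textsl{D}^{\alpha_i(\cdot,\cdot)}_{b_i}$, $i=1,2$) are precisely those required by that theorem, and the $\alpha_i$ satisfy its order restriction by assumption. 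Since $\eta$ vanishes on $\partial\Delta_2$, the line-integral term drops and the identity becomes
\begin{multline*}
0=\int_{a_1}^{b_1}\int_{a_2}^{b_2}\eta(\mathbf{t})\Bigl[\partial_{2}L\left\{u,\alpha_1,\alpha_2\right\}(\mathbf{t})
+{_{t_1}}\textsl{D}^{\alpha_1(\cdot,\cdot)}_{b_1}\partial_{3}L\left\{u,\alpha_1,\alpha_2\right\}(\mathbf{t})\\
+{_{t_2}}\textsl{D}^{\alpha_2(\cdot,\cdot)}_{b_2}\partial_{4}L\left\{u,\alpha_1,\alpha_2\right\}(\mathbf{t})\Bigr]dt_2\,dt_1.
\end{multline*}

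Finally, since this holds for every $\eta\in C^1(\Delta_2)$ vanishing on $\partial\Delta_2$, and the bracketed expression is continuous on $\Delta_2$ (again by the regularity assumed on $L$), the fundamental lemma of the calculus of variations for double integrals forces that expression to vanish identically on $\Delta_2$, which is exactly \eqref{eq:eqELCaputo}. The main technical point I expect to need care is the analytic bookkeeping around $j'(0)$: justifying differentiation under the integral sign in the presence of the singular kernels of the variable-order operators, and confirming that the compositions $f$ and $g$ above genuinely satisfy the $C^1$-integral and continuous-derivative hypotheses of Theorem~\ref{thm:GTD}. Once those points are secured, the remainder is the routine chain of the chain rule, the fractional Green's theorem, and the fundamental lemma.
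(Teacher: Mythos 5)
Your proposal is correct and follows essentially the same route as the paper: embed $u$ in the family $u+\epsilon\eta$, differentiate at $\epsilon=0$, apply Theorem~\ref{thm:GTD} with $g=\partial_{3}L$ and $f=\partial_{4}L$ so the boundary term drops, and conclude with the fundamental lemma. The only (harmless) difference is that you assert continuity of ${^{C}_{a_i}}\textsl{D}^{\alpha_i(\cdot,\cdot)}_{t_i}\eta$ as a consequence of $\eta\in C^1$, whereas the paper simply restricts the class of variations to those $\eta$ for which these derivatives are continuous.
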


\begin{proof}
Suppose that $u$ is an extremizer for $\mathcal{J}$. Consider $\eta \in C^1(\Delta_2;\mathbb{R})$
such that ${^{C}_{a_i}}\textsl{D}^{\alpha_i(\cdot,\cdot)}_{t_i}\eta \in C(\Delta_2;\mathbb{R})$,
$i=1,2$, and $\left.\eta(\mathbf{t})\right|_{\partial \Delta_2}\equiv0$.
We imbed $u$ in the one-parameter family of functions
$\{\hat{u}=u+\varepsilon\eta:|\varepsilon|<\varepsilon_0,\varepsilon_0>0\}$.
Define
\begin{multline*}
J(\varepsilon)=\mathcal{J}[\hat{u}]\\
=\int_{a_1}^{b_1}\int_{a_2}^{b_2} L\left(\mathbf{t},\hat{u}(\mathbf{t}),
{^{C}_{a_1}}\textsl{D}^{\alpha_1(\cdot,\cdot)}_{t_1}\hat{u}(\mathbf{t}),
{^{C}_{a_2}}\textsl{D}^{\alpha_2(\cdot,\cdot)}_{t_2}\hat{u}(\mathbf{t})\right)dt_2 dt_1.
\end{multline*}
Then, a necessary condition for $u$ to be an extremizer for $\mathcal{J}$ is given by
\begin{multline*}
\left.\frac{d J}{d\varepsilon}\right|_{\varepsilon=0}=0
\Leftrightarrow\int_{a_1}^{b_1}\int_{a_2}^{b_2}
\Biggl(\partial_{2} L\left\{u,\alpha_1,\alpha_2\right\}(\mathbf{t}) \cdot \eta(\mathbf{t})\\
+\partial_{3} L\left\{u,\alpha_1,\alpha_2\right\}(\mathbf{t})
\cdot {^{C}_{a_1}}\textsl{D}^{\alpha_1(\cdot,\cdot)}_{t_1}\eta(\mathbf{t})\\
+\partial_{4} L\left\{u,\alpha_1,\alpha_2\right\}(\mathbf{t})
\cdot {^{C}_{a_2}}\textsl{D}^{\alpha_2(\cdot,\cdot)}_{t_2}\eta(\mathbf{t}) \Biggr)dt_2dt_1 = 0.
\end{multline*}
By Theorem~\ref{thm:GTD}, and since $\left.\eta(\mathbf{t})\right|_{\partial \Delta_2}\equiv0$, one has
\begin{multline*}
\int_{a_1}^{b_1}\int_{a_2}^{b_2}
\Biggl(\partial_{3} L\left\{u,\alpha_1,\alpha_2\right\}(\mathbf{t})
\cdot {^{C}_{a_1}}\textsl{D}^{\alpha_1(\cdot,\cdot)}_{t_1}\eta(\mathbf{t})\\
+\partial_{4} L\left\{u,\alpha_1,\alpha_2\right\}(\mathbf{t})
\cdot {^{C}_{a_2}}\textsl{D}^{\alpha_2(\cdot,\cdot)}_{t_2}\eta(\mathbf{t})\Biggr) dt_2 dt_1\\
=\int_{a_1}^{b_1}\int_{a_2}^{b_2}\eta(\mathbf{t})\Biggl(
{_{t_1}}\textsl{D}^{\alpha_1(\cdot,\cdot)}_{b_1}\partial_{3}
L\left\{u,\alpha_1,\alpha_2\right\}(\mathbf{t})\\
+{_{t_2}}\textsl{D}^{\alpha_2(\cdot,\cdot)}_{b_2}\partial_{4}
L\left\{u,\alpha_1,\alpha_2\right\}(\mathbf{t})\Biggr)dt_2 dt_1.
\end{multline*}
Therefore,
\begin{multline*}
\int_{a_1}^{b_1}\int_{a_2}^{b_2}\eta(\mathbf{t})\Biggl(\partial_{2}
L\left\{u,\alpha_1,\alpha_2\right\}(\mathbf{t})\\
+{_{t_1}}\textsl{D}^{\alpha_1(\cdot,\cdot)}_{b_1}\partial_{3}
L\left\{u,\alpha_1,\alpha_2\right\}(\mathbf{t})\\
+{_{t_2}}\textsl{D}^{\alpha_2(\cdot,\cdot)}_{b_2}\partial_{4}
L\left\{u,\alpha_1,\alpha_2\right\}(\mathbf{t})\Biggr)dt_2 dt_1=0.
\end{multline*}
Condition \eqref{eq:eqELCaputo} follows from the
fundamental lemma of the calculus of variations.
\end{proof}


\section{CONCLUSIONS}
\label{sec:conc}

Recently, the variable order fractional calculus has
provided new insights into rich applications in diverse
fields such as physics, cyber-physical systems, signal
processing, and mean field games
\cite{ref:giv:r:2,ref:giv:r:4,ref:giv:r:1,ref:giv:r:3}.
In this article a multidimensional integration by parts formula
for partial integrals of variable fractional order
(Theorem~\ref{theorem:GRI}) and a Green type theorem
with derivatives and integrals of variable
fractional order (Theorem~\ref{thm:GTD}) are proved.
These theorems are then used to obtain Euler--Lagrange
type equations for the minimization
of a functional involving derivatives of variable
fractional order (Theorem~\ref{thm:ELCaputo}).
Our results generalize the recent one-dimensional theory
of the fractional calculus of variations of variable order
\cite{MyID:218} to the two-dimensional case, \textrm{i.e.},
for fractional variational problems with double integrals.


\section*{ACKNOWLEDGMENTS}

Work supported by FEDER funds through
COMPETE (Operational Programme Factors of Competitiveness)
and by Portuguese funds through the
{\it Center for Research and Development
in Mathematics and Applications} (University of Aveiro)
and the Portuguese Foundation for Science and Technology
(FCT), within project PEst-C/MAT/UI4106/2011
with COMPETE number FCOMP-01-0124-FEDER-022690.
Odzijewicz was also supported by FCT through the Ph.D. fellowship
SFRH/BD/33865/2009; Malinowska by Bia{\l}ystok
University of Technology grant S/WI/02/2011;
and Torres by FCT through the Portugal--Austin
cooperation project UTAustin/MAT/0057/2008.
The authors are grateful to three anonymous referees
for valuable remarks and comments which
significantly contributed to the quality of the paper.



\bigskip

{\bf This is a preprint of a paper whose final and definite form will be published in:
51st IEEE Conference on Decision and Control, December 10-13, 2012, Maui, Hawaii, USA.
Article Source/Identifier: PLZ-CDC12.1240.d4462b33.
Submitted 07-March-2012; accepted 17-July-2012.}


\end{document}